\definecolor{azul}{rgb}{0.0, 0.0, 1.0}
\definecolor{rojo}{rgb}{1.0, 0.03, 0.0}
\newtheorem{theo}{Theorem}
\newtheorem{theorem}{Theorem}
\newtheorem{definition}[theorem]{Definition}
\newtheorem{lemma}[theo]{Lemma}
\newtheorem{proposition}[theo]{Proposition}
\newenvironment{proof}[1][Proof]{\noindent\textbf{#1.} }{\ \rule{0.5em}{0.5em}}
\begin{document}

\title{Integral operators with rough kernels in
	variable Lebesgue spaces}
\author{Marta Urciuolo - Lucas Vallejos}
\maketitle

\begin{abstract}
	In this paper we study integral operators with kernels 
	\begin{equation*}
	K(x,y)= k_1( x- A_1y)...k_m( x-A_my),
	\end{equation*}
	$k_i(x)=\frac{\Omega_i(x)}{|x|^{n/q_i}}$ where $\Omega_i: \mathbb{R}^n\to 
	\mathbb{R}$ are homogeneous functions of degree zero, satisfying a size and
	a Dini condition, $A_{i}$ are certain invertible matrices, and \ $\frac
	n{q_1}+\dots\frac n{q_m}=n-\alpha,$ $0\leq \alpha <n.$\newline
	We obtain the boundedness of this operator from $L^{p(\cdot)}$ into $%
	L^{q(\cdot)}$ for $\frac{1}{q(\cdot)}=\frac{1}{p(\cdot)}-\frac{\alpha }{n},$
	for certain exponent functions $p$ satisfying weaker conditions than the classical log-Hölder conditions.
	
	\footnote{%
		Partially supported by CONICET and SECYTUNC}  \footnote{%
		Math. subject classification: 42B25, 42B35.}  \footnote{%
		Key words: Variable Exponents, Fractional Integrals.}
\end{abstract}

\section{Introduction}

Given a measurable set $\Omega\subseteq \mathbb{R}^n$ we denote with $\mathcal{P}(\Omega)$ the family of measurable function $p(\cdot):\Omega \rightarrow [1, \infty]$. Given $p(\cdot)\in \mathcal{P}(\Omega)$ let $L^{p(\cdot)}(\Omega)$ be the Banach space of measurable
functions $f$ on $\Omega$ such that for some $\lambda >0,$ 
\begin{equation*}
\rho_{p(\cdot),\Omega}(f/\lambda) <\infty ,
\end{equation*}
where 
\begin{equation*}
\rho_{p(\cdot),\Omega}(f)=\int_{\Omega\backslash \Omega_{\infty}} \left\vert f(x)\right\vert^{p(x)}dx + \lVert f \rVert_{L^{\infty}(\Omega_{\infty})},
\end{equation*}
$\Omega_{\infty}=\left\lbrace x \in \Omega : p(x)=\infty \right\rbrace$, with norm
\begin{equation*}
\left\Vert f\right\Vert _{L^{p(\cdot)}(\Omega)}=\inf \left\{ \lambda >0: \rho_{p(\cdot),\Omega}(f/\lambda)\leq 1\right\}.
\end{equation*}%
We will denote $\lVert f \rVert_{p(\cdot)}$ instead of $\lVert f \rVert_{L^{p(\cdot)}(\Omega)}$ if the role of the set $\Omega$ is clear enough.
These spaces are known as \textit{variable exponent spaces} and are a
generalization of the classical Lebesgue spaces $L^{p}(\mathbb{R}^{n}).$
They have been widely studied lately. See for example \cite{CCF}, \cite%
{C-F-N} and \cite{DHHR}. The first step was to determine sufficient
conditions on $p(\cdot)$ for the boundedness on $L^{p(\cdot)}$ of the Hardy
Littlewood maximal operator 
\begin{equation*}
\mathcal{M}f(x)=\sup\limits_{B}\frac{1}{\left\vert B\right\vert }%
\int_{B}\left\vert f(y)\right\vert dy,
\end{equation*}%
where the supremun is taken over all balls $B$ containing $x$. 
Analogously for $0<\alpha <n$ we recall the definition of the fractional maximal operator
\begin{equation*}
M_{\alpha }f(x)=\sup_{Q\ni x}\frac{1}{\left\vert B\right\vert^{1-\frac{\alpha}{n}}}\int_{B}\left\vert f(y)\right\vert dy,
\end{equation*}
where also the supremun is taken over all balls $B$ containing $x$.\newline
We also define $M_{\alpha ,s}$, for all $1\leq s < \infty$, 
\begin{equation}
\label{Malfas}
M_{\alpha ,s}f=(M_{\alpha .s}\left\vert f\right\vert ^{s})^{1/s}.
\end{equation}
Let $p_{-}=ess\inf $\textit{\ }$p(x)$ and let $p_{+}=ess\sup $ $p(x)$.
\begin{definition}
	Given $\Omega \subseteq \mathbb{R}^{n}$, and \ function $r(\cdot ):\Omega
	\rightarrow \mathbb{R}$, we say that $r(\cdot )$ is locally log-Hölder
	continuous, and denote this by $r(\cdot )\in LH_{0}(\Omega )$, if there
	exist a constant $C_{0}$ such that for all $x,y\in \Omega $, $\left\vert
	x-y\right\vert <\frac{1}{2}$,
	\begin{equation*}
	\left\vert r(x)-r(y)\right\vert \leq \frac{C_{0}}{-\log (\left\vert
		x-y\right\vert )}.	
	\label{log-local}
	\end{equation*}
	We say that $r(\cdot )$ is log-Hölder continuous at infinity, and denote
	this by $r(\cdot )\in LH_{\infty }(\Omega )$, if there exist constants $
	C_{\infty }$ and $r^{\infty }$ such that for all $x\in \Omega $,
	
	\begin{equation*}
	\left\vert r(x)-r^{\infty }\right\vert \leq \frac{C_{\infty }}{\log
		(e+\left\vert x\right\vert )}\text{.}
	\label{log-infinito}
	\end{equation*}
	
\end{definition}

In \cite{C-F-N} , D. Cruz Uribe, A. Fiorenza and C. J. Neugebauer proved the
following result. If  $p(\cdot) \in \mathcal{P}(\mathbb{R}^n)$, $1<p_{-}\leq p_{+}<\infty$ and  $p(\cdot )\in LH_{0}(\mathbb{R}%
^{n})\cap LH_{\infty }(\mathbb{R}^{n})$, then the Hardy Littlewood maximal
operator is bounded on $L^{p(\cdot)}(\mathbb{R}^{n}).$ In \cite{C-F} (chapter 4)
the authors show that the boundedness of the maximal operator can be
obtained under weaker conditions on the exponent $p(\cdot ).$ They define
the $N_{\infty }-condition$ as follows, 

\begin{definition}
	Given $\Omega\subseteq \mathbb{R}^n$ and $
	p(\cdot)\in \mathcal{P(\mathbb{R}}^{n}\mathcal{)}$, we say that $%
	p(\cdot) \in N_{\infty}(\Omega)$ if there exist constants $%
	\Lambda_{\infty}$ and $p_{\infty}$ such that  
	\begin{equation*}
	\int_{\Omega_+}\exp \left( -\Lambda_{\infty} \left| \dfrac{1}{p(x)} - \dfrac{%
		1}{p_{\infty}} \right| ^{-1} \right) dx < \infty.
	\end{equation*}
	where $\Omega_+ = \left\lbrace x \in \Omega : \left| \dfrac{1}{p(x)} - 
	\dfrac{1}{p_{\infty}} \right|>0 \right\rbrace$.
\end{definition}

Also, in \cite{C-F}, the authors define the $K_{0}-condition$ as follows,

\begin{definition}
	Given $%
	p(\cdot )\in \mathcal{P(\mathbb{R}}^{n}\mathcal{)}$, then $p(\cdot )\in
	K_{0}(\mathbb{R}^{n})$ if there exists a constant $C$ such that, for
	every cube $Q$,
	
	\begin{equation*}
	\left\Vert \chi _{Q}\right\Vert _{L^{p(\cdot )}(\mathbb{R}^{n})}\left\Vert
	\chi _{Q}\right\Vert _{L^{p^{\prime }(\cdot )}(\mathbb{R}^{n})}\leq
	C\left\vert Q\right\vert 
	\end{equation*}
\end{definition}

They prove the following result. If $p(\cdot)\in \mathcal{P}(\mathbb{R}^n) $, $1<p_{-}\leq p_{+}<\infty$ and $p(\cdot )\in K_{0}(%
\mathbb{R}^{n})\cap N_{\infty }(\mathbb{R}^{n})$, then the Hardy Littlewood
maximal operator is bounded on $L^{p(.)}(\mathbb{R}^{n}).$ They also show
that $LH_{0}(\mathbb{R}^{n})\cap LH_{\infty }(\mathbb{R}^{n})\subset K_{0}(%
\mathbb{R}^{n})\cap N_{\infty }(\mathbb{R}^{n})$ and they give an example
that shows that the the inclusion is strict.\newline
Let $0\leq \alpha <n$, $m\in \mathbb{N}$. For $1\leq i\leq m,$ let $%
1<q_{i}<\infty $ such that $\frac{n}{q_{1}}+\dots +\frac{n}{q_{m}}=n-\alpha .
$ For $\alpha =0$ we take $m>1.$We denote by $\Sigma =\Sigma _{n-1}$ the
unit sphere in $\mathbb{R}^{n}$. Let $\Omega _{i}\in L^{1}(\Sigma )$. If $%
x\neq 0$, we write $x^{\prime }=x/|x|$. We extend this function to $\mathbb{R%
}^{n}\setminus \{0\}$ as $\Omega _{i}(x)=\Omega _{i}(x^{\prime })$.\newline 

Let 
\begin{equation}
\label{ki}
k_{i}(x)=\frac{\Omega _{i}(x)}{|x|^{n/q_{i}}},
\end{equation}%
and let
\begin{equation}
\label{T}
T_{\alpha }f(x)=\int_{\mathbb{R}^{n}}K(x,y)f(y)dy,
\end{equation}
with $K(x,y)=k_{1}(x-A_{1}y)...k_{m}(x-A_{m}y)$, where $A_{i}$, are certain
invertible matrices and $f\in L_{loc}^{\infty }(\mathbb{R}^{n})$. In \cite{Ri-U} the authors consider the operator $T_\alpha$ defined in (\ref{T}) where, for $1\leq i\leq m$, $k_i$ is given by (\ref{ki}). For $1\leq p\leq \infty$ and $\Omega_i \in L^1(\Sigma)$, they define the $L^{p}$- modulus of continuity as 
\begin{equation*}
\varpi_{_i, p}(t)=\sup_{|y|\le t}
\|\Omega_i(\cdot+y)-\Omega_i(\cdot)\|_{p,\Sigma}.
\end{equation*}
They make the following hypothesis about the functions $\Omega_i$, $1\leq
i\leq m$,

\ 

($H_1$)\ \ There exists $p_i>q_i$ such that $\Omega _i\in L^{p_i}(\Sigma)$,

\ 

($H_{2}$)\ \ $\displaystyle\int_{0}^{1}\varpi _{i,p_{i}}(t)\frac{dt}{t}%
<\infty .$\newline

They obtain the boundedness of this kind of operators in weighted Lebesgue spaces. We recall that a weight $\omega $ is a locally integrable and non negative function. The Muckenhoupt class $\mathcal{A}_{p}$, $1<p<\infty $, is defined as the class of weights $\omega $ such that

\begin{equation*}
\sup_{Q} \left[ \left( \frac{1}{\lvert Q \rvert} \displaystyle \int_Q \omega
\right) \left( \frac{1}{\lvert Q \rvert} \displaystyle \int_Q \omega^{-\frac{
		1}{p-1}} \right)^{p-1} \right] < \infty,
\end{equation*}
where $Q$ is a cube in $\mathbb{R}^n$.

For $p=1$, $\mathcal{A}_1$ is the class of weights $\omega$ satisfying that
there exists $c>0$ such that

\begin{equation*}
\mathcal{M} \omega (x) \leq c \omega (x) \ a.e. \ x \in \mathbb{R}^n.
\end{equation*}

We denote $\left[ \omega \right]_{\mathcal{A}_1}$ the infimum of the
constant $c$ such that $\omega$ satisfies the above inequation.

In this paper we study the boundedness of $T_{\alpha }$ on variable Lebesgue
spaces. The exponent functions will satisfy certain regularity conditions and
also certain relations with the different matrices $A_{i}$ involved in the
kernel $K$. We will ask the hypothesis $p(A_i x)\leq p(x) \ a.e.x \in \mathbb{R}^n$. In \cite{U-V-2} we proved that this condition is in fact necessary in some particular cases.
We will first prove the boundeness of the fractional maximal operator in variable Lebesgue spaces, with standard extrapolation techniques. Then we obtain the $L^{p(\cdot)}(\mathbb{R}^n)-L^{q(\cdot)}(\mathbb{R}^n)$ boundedess of $T_{\alpha}$, $\frac{1}{q(\cdot)}-\frac{1}{p(\cdot)}=\frac{\alpha}{n}$. To obtain this result we use
the boundedness of the "sharp maximal function". We recall that given a function $f\in L_{loc}^{1}(\mathbb{R}^{n})$ we define the sharp maximal function 
\begin{equation*}
M^{\#}f(x)=\sup\limits_{B}\frac{1}{\left\vert B\right\vert }%
\int\limits_{B}\left\vert f(y)-\frac{1}{\left\vert B\right\vert }%
\int\limits_{B}\left\vert f\right\vert \right\vert dy,
\end{equation*}%
where $B$ is a ball containing $x.$ In \cite{C-F} the authors prove that
given $p(\cdot)$ with $1\leq p_{-}\leq p_{+}<\infty ,$ if the maximal operator
is bounded on $L^{p^{\prime }(\cdot)}\left( \mathbb{R}^{n}\right) $ then there exists $c>0$ such that 
\begin{equation*}
\sup_{t>0}\left\Vert t\chi _{\left\{ x:\left\vert f(x)\right\vert >t\right\}
}\right\Vert _{p(\cdot)}\leq c\sup_{t>0}\left\Vert t\chi _{\left\{ x:\left\vert
	M^{\#}f(x)\right\vert >t\right\} }\right\Vert _{p(\cdot)}
\end{equation*}%
and 
\begin{equation*}
\left\Vert f\right\Vert _{p(\cdot)}\leq c\left\Vert M^{\#}f\right\Vert _{p(\cdot)}.
\end{equation*}

\section{Main results}

In this paragraph we use the sharp maximal function to obtain the boundedness of the operator defined by (\ref{T}).\newline
In \cite{M-W}, B. Muckenhoupt y R.L. Wheeden define $\mathcal{A}(p,q),$ $1<p<\infty$ and $1<q<\infty$, as the class of weights $\omega$ such that
\begin{equation*}
\sup_{Q} \displaystyle\left[ \left( \dfrac{1}{\lvert Q \rvert} \displaystyle%
\int_{Q}\omega(x)^q dx \right)^{\frac{1}{q}}\left( \dfrac{1}{\lvert Q \rvert}
\displaystyle\int_{Q} \omega(x)^{-p^{\prime }}dx \right)^{\frac{1}{p^{\prime
}}}\displaystyle\right] \ < \ \infty.
\end{equation*}
When $p=1$, $\omega \in \mathcal{A}(1,q)$ if only if
\begin{equation*}
\sup_{Q} \displaystyle\left[ \lVert \omega^{-1} \chi_Q \rVert_{\infty}\left( 
\dfrac{1}{\lvert Q \rvert} \displaystyle\int_Q \omega(x)^{q}dx \right)^{%
	\frac{1}{q}}\displaystyle\right] \ < \ \infty.
\end{equation*}
They prove that $M_{\alpha}:L^p(\omega^p)\longrightarrow L^q(\omega^q)$ for weights $\omega \in \mathcal{A}(p,q)$, $\frac{1}{p}-\frac{1}{q}=\frac{\alpha}{n}$. They also obtain the corresponding weak type inequality. With classical extrapolation techniques we get the following result,
\begin{lemma}
	\label{M-F}
	Let $0<\alpha <n$ and let $p(\cdot )\in \mathcal{P(\mathbb{R}}^{n}\mathcal{)%
	}$, such that $1\leq p_{-}\leq p_{+}<\frac{n}{\alpha }$. Let $q(\cdot) \in \mathcal{P}(\mathbb{R}^n)$ be defined by $\frac{1}{p(x)}-\frac{1}{q(x)}=\frac{\alpha }{n}$ and we suppose that the maximal operator is bounded on $L^{(q(\cdot)/q_0)'}(\mathbb{R}^{n})$, with $q_0$ such that $\frac{1}{p_{-}}-\frac{1}{q_0}=\frac{\alpha}{n}$.\newline
 	If $p_{-}>1$ then there exist $C>0$ such that
	\begin{equation*}
	\left\Vert M_{\alpha }f\right\Vert _{q(\cdot )}\leq C\left\Vert f\right\Vert
	_{p(\cdot )}. 
	\end{equation*}
	If $p_{-}=1$ then, for all $\lambda >0$,
	\begin{equation*}
	\left\Vert \lambda \chi _{\{x\in \mathbb{R}^{n}:\left\vert M_{\alpha
		}f(x)\right\vert >\lambda \}}\right\Vert _{q(\cdot )}\leq C\left\Vert
	f\right\Vert _{p(\cdot )}.
	\end{equation*}
\end{lemma}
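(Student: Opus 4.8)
The plan is to derive both inequalities from the weighted estimates of Muckenhoupt and Wheeden via off-diagonal Rubio de Francia extrapolation adapted to variable Lebesgue spaces. The starting point is that the gap relation $\frac{1}{p(x)}-\frac{1}{q(x)}=\frac{\alpha}{n}$ is constant in $x$, so it matches the off-diagonal exponent gap $\frac{1}{p_0}-\frac{1}{q_0}=\frac{\alpha}{n}$ of a fixed pair of constant exponents. I would fix $p_0=p_{-}$ and take $q_0$ as in the statement, noting that $q_0<\infty$ because $p_{+}<n/\alpha$ forces $\frac{1}{p_{-}}-\frac{\alpha}{n}>0$. With this choice the scalar inequality $\|M_\alpha f\|_{L^{q_0}(\omega^{q_0})}\leq C\|f\|_{L^{p_0}(\omega^{p_0})}$ holds for every $\omega\in\mathcal{A}(p_0,q_0)$ by the Muckenhoupt--Wheeden theorem recalled above.

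First I would set up the family of pairs $\mathcal{F}=\{(M_\alpha f,f)\}$, where the first coordinate is the quantity to be controlled; to guarantee that the left-hand side is finite before taking limits, one works instead with the truncated family $\{(\min(M_\alpha f,N)\chi_{B(0,N)},f)\}$ and lets $N\to\infty$ at the end by monotone convergence. For such pairs the weighted bound above is exactly the hypothesis required by the off-diagonal extrapolation theorem in the variable setting. Applying that theorem with the constant exponents $p_0,q_0$ and the variable exponents $p(\cdot),q(\cdot)$ --- whose gap equals that of $p_0,q_0$ and which satisfy $p(\cdot)\geq p_{-}=p_0$ pointwise --- yields $\|M_\alpha f\|_{q(\cdot)}\leq C\|f\|_{p(\cdot)}$ provided $\mathcal{M}$ is bounded on $L^{(q(\cdot)/q_0)'}(\mathbb{R}^n)$. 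A direct computation shows $\frac{q_0}{q(x)}=q_0\left(\frac{1}{p(x)}-\frac{1}{p_{-}}\right)+1\leq 1$, so $q(\cdot)/q_0\geq 1$ and the dual exponent $(q(\cdot)/q_0)'$ is well defined; the boundedness of $\mathcal{M}$ on $L^{(q(\cdot)/q_0)'}$ is precisely the hypothesis of the lemma, so in the case $p_{-}>1$ the strong-type inequality follows at once.

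For the endpoint $p_{-}=1$ the same scheme applies, but starting from the weak-type weighted inequality of Muckenhoupt and Wheeden for $\omega\in\mathcal{A}(1,q_0)$ and invoking the weak-type version of the off-diagonal extrapolation theorem. Here one controls the quasi-norm $\sup_{\lambda>0}\|\lambda\chi_{\{x:\,|M_\alpha f(x)|>\lambda\}}\|_{q(\cdot)}$, again using the assumed boundedness of $\mathcal{M}$ on $L^{(q(\cdot)/q_0)'}$.

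The main obstacle I anticipate is bookkeeping rather than conceptual: verifying that the variable exponents genuinely satisfy the hypotheses of the off-diagonal extrapolation theorem (the constant-gap condition, the inequality $q(\cdot)/q_0\geq 1$, and the fact that the single maximal-operator hypothesis on $L^{(q(\cdot)/q_0)'}$ is the correct one that the theorem demands), and, in the weak-type case, handling the possible non-finiteness of $M_\alpha f$ through truncation and a limiting argument so that the extrapolation theorem can be legitimately applied to the truncated pairs.
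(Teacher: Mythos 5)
Your proposal is correct and takes essentially the same route as the paper: both derive the lemma from the Muckenhoupt--Wheeden weighted inequalities for $M_{\alpha}$ via off-diagonal Rubio de Francia extrapolation, with the same choice $p_{0}=p_{-}$, the same $q_{0}$, and the same key hypothesis that $\mathcal{M}$ is bounded on $L^{(q(\cdot)/q_{0})'}(\mathbb{R}^{n})$. The only difference is presentational: the paper inlines the extrapolation machinery (duality, the iteration algorithm $Rh=\sum_{k}M^{k}h/(2^{k}\|\mathcal{M}\|^{k})$, the $\mathcal{A}_{1}\subset\mathcal{A}(p_{-},q_{0})$ embedding, and H\"older's inequality), deferring the remaining details to Theorem 5.46 in the book of Cruz-Uribe and Fiorenza, whereas you invoke that off-diagonal extrapolation theorem as a black box.
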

\begin{proof}
	Let $q_{0}:\frac{1}{p_{-}}-\frac{1}{q_{0}}=\frac{\alpha }{n}$ and we suppose that $p_{-}>1$. Let $\tilde{q}(\cdot )=\frac{q(\cdot )}{q_{0}}$, we take a bounded function $f$ with compact support. 
	\begin{equation*}
	\left\Vert M_{\alpha }f\right\Vert _{q(\cdot )}^{q_{0}} =\left\Vert
	(M_{\alpha }f)^{q_{0}}\right\Vert _{\tilde{q}(\cdot )}\leq C\sup_{\left\Vert
		h\right\Vert _{\tilde{q}^{\prime }(\cdot )}=1}\int \left( M_{\alpha
	}f\right) ^{q_{0}}(x)h(x)dx,
	\end{equation*}
	We define an iteration algorithm on $L^{\tilde{q}^{\prime }(\cdot)}(\mathbb{R}^{n})$ by 
	\begin{equation*}
	Rh(x)=\sum_{k=0}^{\infty }\frac{M^{k}h(x)}{2^{k}\left\Vert M\right\Vert _{%
			\tilde{q}^{\prime }(\cdot )}^{k}}.
	\end{equation*}
	As in \cite{C-F} it follows that
	
	($i$) $\left\vert h(x)\right\vert \leq Rh(x)$ for all $x\in \mathbb{R}^{n}$.
	
	($ii$) $R$ is bounded on $L^{\tilde{q}^{\prime }(\cdot )}(\mathbb{R}^{n})$
	and $\left\Vert Rh\right\Vert _{\tilde{q}^{\prime }(\cdot )}\leq 2\left\Vert
	h\right\Vert _{\tilde{q}^{\prime }(\cdot )}$
	
	($iii$) $Rh\in \mathcal{A}_{1}$ and $\left[ Rh\right] _{\mathcal{A}_{1}}\leq 2\left\Vert
	M\right\Vert _{\tilde{q}^{\prime }(\cdot )}$\newline
	So
	\begin{equation*}
	\leq C\sup_{\left\Vert h\right\Vert _{\tilde{q}^{\prime }(\cdot )}=1}\int
	\left( M_{\alpha }f\right) ^{q_{0}}(x)\left( Rh(x)^{\frac{1}{q_{0}}}\right)
	^{q_{0}}dx,
	\end{equation*}
	since $Rh \in \mathcal{A}_{1}$ then $Rh^{\frac{1}{q_0}} \in \mathcal{A}_{1} \subset \mathcal{A}(p_{-},q_0)$, and so, since $M_{\alpha }$ is bounded from $L^{p_{-}}(w^{p_{-}})$ in $L^{q_{0}}(w^{q_{0}})$ for weights $w\in \mathcal{A}(p_{-},q_{0})$,
	\begin{equation*}
	\leq C\sup_{\left\Vert h\right\Vert _{\tilde{q}^{\prime }(\cdot )}=1}\left(
	\int \left( f\right) ^{p_{-}}(x)\left( Rh(x)^{\frac{p_{-}}{q_{0}}}\right)
	dx\right) ^{\frac{q_{0}}{p_{-}}},
	\end{equation*}
	
	now we apply the H\"{o}lder's inequality with $\tilde{p}(\cdot )=\frac{%
		p(\cdot )}{p_{-}}$,
	
	\begin{equation*}
	\leq C\left\Vert f^{p_{-}}\right\Vert _{\tilde{p}(\cdot )}^{\frac{q_{0}}{%
			p_{-}}}\sup_{\left\Vert h\right\Vert _{\tilde{q}^{\prime }(\cdot
			)}=1}\left\Vert Rh^{\frac{p_{-}}{q_{0}}}\right\Vert _{\tilde{p}^{\prime
		}(\cdot )}^{\frac{q_{0}}{p_{-}}}
	\end{equation*}
	
	\begin{equation*}
	\leq C\left\Vert f\right\Vert _{p(\cdot )}^{q_{0}}.
	\end{equation*}
	We now show that $\left\Vert M_{\alpha }f\right\Vert _{q(\cdot )}<\infty $.
	By proposition 2.12 in [2] it is enough to check that $\rho _{q(\cdot
		)}(M_{\alpha }f)<\infty $.
	\begin{equation*}
	\left\vert M_{\alpha }f(x)\right\vert ^{q(x)}\leq \left\vert M_{\alpha
	}f(x)\right\vert ^{q_{+}}\chi _{\{x\in \mathbb{R}^{n}:M_{\alpha
		}f(x)>1\}}+\left\vert M_{\alpha }f(x)\right\vert ^{q_{-}}\chi _{\{x\in 
		\mathbb{R}^{n}:M_{\alpha }f(x)\leq 1\}}
	\end{equation*}
	Now, since $f$ is bounded with compact support, $M_{\alpha }f\in L^{s}(\mathbb{R}^{n})$ for all $\frac{n}{n-\alpha }<s<\infty $.\newline\newline
	The rest of the proof follows as in the proof of Theorem 5.46 in \cite{C-F}.
\end{proof}
\begin{lemma}
 	\label{desigualdad-Ninfinito}
 	Given $\Omega \subseteq \mathbb{R}^{n}$. If $p(\cdot )\in N_{\infty }(\Omega
 	)$ and $p_{\infty}=\infty$ then $1\in L^{p(\cdot )}(\Omega )$.
 \end{lemma}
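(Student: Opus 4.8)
The plan is to verify membership directly from the definition of $L^{p(\cdot)}(\Omega)$, by exhibiting a single $\lambda>0$ for which the modular $\rho_{p(\cdot),\Omega}(1/\lambda)$ is finite. Since $1$ is a constant function, the endpoint term $\|1/\lambda\|_{L^{\infty}(\Omega_{\infty})}=1/\lambda$ is finite for every $\lambda>0$, so the only term that can cause trouble is the integral $\int_{\Omega\setminus\Omega_{\infty}}(1/\lambda)^{p(x)}\,dx$. Everything therefore hinges on controlling this integral by means of the $N_{\infty}$ hypothesis.

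The key observation is to translate the $N_{\infty}$ condition under the assumption $p_{\infty}=\infty$. First I would note that $p_{\infty}=\infty$ forces $1/p_{\infty}=0$, so that the exceptional set becomes $\Omega_{+}=\{x\in\Omega:1/p(x)>0\}=\Omega\setminus\Omega_{\infty}$, and on $\Omega_{+}$ one has $\left|\frac{1}{p(x)}-\frac{1}{p_{\infty}}\right|^{-1}=p(x)$. Consequently the hypothesis $p(\cdot)\in N_{\infty}(\Omega)$ reads precisely as $\int_{\Omega\setminus\Omega_{\infty}}\exp\!\left(-\Lambda_{\infty}\,p(x)\right)dx<\infty$ for the associated (positive) constant $\Lambda_{\infty}$.

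With this reformulation in hand, I would choose $\lambda=e^{\Lambda_{\infty}}$. Then $(1/\lambda)^{p(x)}=\exp(-\Lambda_{\infty}p(x))$ pointwise on $\Omega\setminus\Omega_{\infty}$, so the integral term of the modular coincides exactly with the finite quantity furnished by the $N_{\infty}$ condition. Adding the two contributions gives $\rho_{p(\cdot),\Omega}(1/\lambda)=\int_{\Omega\setminus\Omega_{\infty}}e^{-\Lambda_{\infty}p(x)}\,dx+e^{-\Lambda_{\infty}}<\infty$, which is precisely the statement that $1/\lambda$, and hence the constant function $1$, lies in $L^{p(\cdot)}(\Omega)$.

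The argument is short, so there is no genuine analytic obstacle; the only point requiring care is the bookkeeping in converting the abstract $N_{\infty}$ integral into a usable form, namely correctly reading off that $1/p_{\infty}=0$ and $\left|1/p(x)\right|^{-1}=p(x)$, and then matching the scaling $\lambda=e^{\Lambda_{\infty}}$ to the constant appearing in the definition. One should also record that $\Lambda_{\infty}$ may be taken positive, so that $\lambda>1$ and the comparison is valid; this is standard for the $N_{\infty}$ class.
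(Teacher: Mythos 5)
Your proof is correct and follows essentially the same route as the paper's: both reformulate the $N_{\infty}$ condition when $p_{\infty}=\infty$ as $\int_{\Omega\setminus\Omega_{\infty}}e^{-\Lambda_{\infty}p(x)}\,dx<\infty$ and then choose $\lambda$ comparable to $e^{\Lambda_{\infty}}$ so that the modular of $1/\lambda$ is controlled by this integral (the paper takes $\lambda$ large enough that $\ln\lambda\geq\Lambda_{\infty}$, you take $\lambda=e^{\Lambda_{\infty}}$ exactly). Your explicit handling of the $L^{\infty}(\Omega_{\infty})$ term and of the positivity of $\Lambda_{\infty}$ is a minor tidiness improvement, not a different argument.
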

 \begin{proof}
 	For $\lambda >1$ sufficiently large, by the $N_{\infty }-$ condition for $p$, and $\Omega _{+}=\Omega \smallsetminus \Omega _{\infty }$,
 	\begin{equation*}
 	\int_{\Omega \smallsetminus \Omega _{\infty }}\lambda
 	^{-p(x)}dx=\int_{\Omega \smallsetminus \Omega _{\infty }}e^{-p(x)\ln
 		(\lambda )}dx
 	\end{equation*}
 	\begin{equation*}
 		=\int_{\Omega \smallsetminus \Omega _{\infty }}e^{-\Lambda
 		_{\infty }p(x)\frac{\ln (\lambda )}{\Lambda _{\infty }}}dx\leq \int_{\Omega
 		\smallsetminus \Omega _{\infty }}e^{-\Lambda _{\infty }p(x)}dx <\infty.
 	\end{equation*}
 \end{proof}
 \begin{proposition}
 	\label{prop-Ninfty}
  	Let $A$ be an invertible matrix $n \times n$.\newline
 	$a)$ If $p(\cdot )\in N_{\infty }(\mathbb{R}^n)$, $1\leq p_{-}\leq p_{+}<\infty $ and 
 	$p(Ax)\leq p(x) \ a.e.x\in \mathbb{R}^n$, then there exists $c>0$ such that
 	\begin{equation*}
 	\left\Vert f\circ A^{-1}\right\Vert _{p(\cdot )}\leq c\left\Vert
 	f\right\Vert _{p(\cdot )},
 	\end{equation*}
 	for all $f\in L^{p(\cdot )}(\mathbb{R}^n)$.\newline
 	$b)$ If $p(Ax)=p(x) \ a.e.x\in\mathbb{R}^n$ then there exist $c>0$ such that
 	\begin{equation*}
 	\lVert f\circ A \rVert_{p(\cdot)} \leq c \lVert f \rVert_{p(\cdot)},
 	\end{equation*} 
 	for all $f\in L^{p(\cdot )}(\mathbb{R}^n)$.
 \end{proposition}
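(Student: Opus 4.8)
The plan is to argue at the level of the modular $\rho_{p(\cdot)}$ and then pass to the norm by homogeneity, using that $p_+<\infty$ makes $\Omega_\infty$ a null set, so that $\rho_{p(\cdot)}(g)=\int_{\mathbb{R}^n}|g(x)|^{p(x)}\,dx$ and that $\|g\|_{p(\cdot)}\le 1$ is equivalent to $\rho_{p(\cdot)}(g)\le 1$. I would dispose of $b)$ first, since it is a genuine change of variables: as $p(Ax)=p(x)$ a.e., the substitution $z=Ax$ gives
\begin{equation*}
\rho_{p(\cdot)}(f\circ A)=|\det A|^{-1}\int_{\mathbb{R}^n}|f(z)|^{p(A^{-1}z)}\,dz=|\det A|^{-1}\rho_{p(\cdot)}(f),
\end{equation*}
because $p(Ax)=p(x)$ forces $p(A^{-1}z)=p(z)$. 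Applying this to $f/\lambda$ for every $\lambda>0$ and invoking the unit--ball property yields $\|f\circ A\|_{p(\cdot)}\le c\|f\|_{p(\cdot)}$ with $c=c(|\det A|,p_\pm)$.

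For $a)$ I would normalize $\|f\|_{p(\cdot)}=1$ (so $\rho_{p(\cdot)}(f)\le1$) and look for $C\ge1$, depending on $|\det A|$, $p_\pm$ and the $N_\infty$ data, with $\rho_{p(\cdot)}\big((f\circ A^{-1})/C\big)\le1$. The substitution $y=A^{-1}x$ converts this into
\begin{equation*}
\rho_{p(\cdot)}\big((f\circ A^{-1})/C\big)=|\det A|\int_{\mathbb{R}^n}\Big|\tfrac{f(y)}{C}\Big|^{p(Ay)}\,dy,
\end{equation*}
where the hypothesis $p(Ay)\le p(y)$ is the point of contact with $f$. On $\{|f|\ge1\}$ lowering the exponent helps: there $|f/C|^{p(Ay)}\le|f|^{p(y)}/C^{p_-}$ in every case, so this piece is at most $|\det A|\,C^{-p_-}\rho_{p(\cdot)}(f)\le|\det A|\,C^{-p_-}$, which is $\le\tfrac12$ once $C$ is large.

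The whole difficulty sits on $\{|f|<1\}$, where $|f|^{p(Ay)}\ge|f|^{p(y)}$ and the naive modular comparison breaks down; this is the step I expect to be the main obstacle, and the only place where $N_\infty$ is truly used. There $|f/C|^{p(Ay)}$ still carries the factor $C^{-p(Ay)}\le C^{-p_-}$, so it is enough to bound $\int_{\{|f|<1\}}|f|^{p(Ay)}\,dy$ by a constant independent of $C$; the surviving $C^{-p_-}$ then forces this contribution below $\tfrac12$ for large $C$. To obtain that bound I would use the pointwise identity
\begin{equation*}
|f|^{p(Ay)}-|f|^{p(y)}=\big(|f|^{-\delta(y)}-1\big)|f|^{p(y)},\qquad \delta(y)=p(y)-p(Ay)\ge0,
\end{equation*}
and split $\{|f|<1\}$ by the size of $\delta(y)\ln(1/|f|)$. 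Where it is at most a fixed $K$, the factor $|f|^{-\delta}-1$ is $O_K(1)$ and the excess is absorbed into $\rho_{p(\cdot)}(f)$. Where it exceeds $K$, the value $|f|$ is so small that $|f|^{p(Ay)}\le|f|^{p_-}\le e^{-Kp_-/\delta(y)}$; writing $\delta(y)\le|p(y)-p_\infty|+|p(Ay)-p_\infty|$ and $|p-p_\infty|\le p_+p_\infty\,|1/p-1/p_\infty|$, I would dominate $e^{-Kp_-/\delta(y)}$ by a sum of two $N_\infty$ integrands, one at $y$ and one at $Ay$. Taking $K$ large enough that the resulting exponential constant beats $\Lambda_\infty$, the $N_\infty$ condition (used directly, and after the substitution $z=Ay$ for the second term) makes both integrals finite, which closes the estimate. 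I note that one does not invoke Lemma~\ref{desigualdad-Ninfinito} here: since $p_+<\infty$ the $N_\infty$ condition can only hold with a finite $p_\infty$, so the relevant mechanism is the integrability built into $N_\infty$ itself rather than $1\in L^{p(\cdot)}$.
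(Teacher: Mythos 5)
Your proposal is correct, and for part $a)$ it takes a genuinely different route from the paper's proof. Part $b)$ coincides with the paper's argument: the exact change of variables in the modular, followed by the rescaling by $D^{1/p_-}$, $D=\lvert\det(A^{-1})\rvert$, when $D>1$ (your ``unit-ball property'' sentence compresses exactly that computation). In part $a)$ both proofs split $f$ according to $\{|f|>1\}$ and $\{|f|\le 1\}$ and treat the large part identically (there $p(Ay)\le p(y)$ lowers the exponent favourably and the change of variables only costs $\lvert\det A\rvert$); the proofs diverge on the small part. The paper handles it at the level of norms: it detours through the constant-exponent space $L^{p_\infty}$ via Lemma 3.28 of \cite{C-F}, introduces defect exponents $r(\cdot)$ on $E=\{p(\cdot)\ge p_\infty\}$ and $s(\cdot)$ on $F=\{p(\cdot)<p_\infty\}$, applies the generalized H\"older inequality, uses Lemma \ref{desigualdad-Ninfinito} for $r(\cdot)$ and $s(\cdot)$ (whose index at infinity is $\infty$), and finally needs a Fatou/density step (Theorem 2.59 in \cite{C-F}) to pass from bounded compactly supported $f$ to general $f$. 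You instead stay entirely at the modular level: splitting $\{|f|<1\}$ by the size of $\delta(y)\log(1/|f(y)|)$, $\delta=p-p\circ A$, and using the elementary domination $e^{-c/(a+b)}\le e^{-c/(2a)}+e^{-c/(2b)}$, you feed the deviation of the exponent directly into the exponential integrand of the $N_\infty$ condition, once at $y$ and once at $Ay$ (after a change of variables). The two mechanisms are at bottom the same --- on $E$ the paper's defect exponent satisfies $1/r(x)=\lvert 1/p(x)-1/p_\infty\rvert$, so Lemma \ref{desigualdad-Ninfinito} applied to $r(\cdot)$ is precisely the $N_\infty$ integral for $p(\cdot)$ --- but your packaging is more self-contained (no Lemma 3.28, no H\"older step, no density argument, and the conclusion holds at once for every $f\in L^{p(\cdot)}$ by homogeneity) and makes the constant explicit in terms of $\lvert\det A\rvert$, $p_\pm$, $p_\infty$, $\Lambda_\infty$ and the value of the $N_\infty$ integral. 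Your closing remark is also right and is the point that legitimizes your use of $\lvert p-p_\infty\rvert\le p_+\,p_\infty\,\lvert 1/p-1/p_\infty\rvert$: since $p_+<\infty$, the set $\Omega_+$ is all of $\mathbb{R}^n$ up to a null set, so an infinite $p_\infty$ would make the $N_\infty$ integrand bounded below on a set of infinite measure; hence $p_\infty$ is finite, and Lemma \ref{desigualdad-Ninfinito} is indeed never needed in your version.
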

 \begin{proof}
 	$a)$ We assume that $f$ is bounded with compact support and $\left\Vert
 	f\right\Vert _{p(\cdot )}\leq 1$. We will prove that,
 	
 	\begin{equation*}
 	\left\Vert f\circ A^{-1}\right\Vert _{p(\cdot )}\leq c.
 	\end{equation*}
 	We descompose $f=f_{1}+f_{2}$ where $f_{1}=f\chi _{\left\{ x:\left\vert
 		f(x)\right\vert >1\right\} }$ and $f_{2}=f\chi _{\left\{ x:\left\vert
 		f(x)\right\vert \leq 1\right\} }$,
 	\begin{equation*}
 	\left\Vert f\circ A^{-1}\right\Vert _{p(\cdot )}\leq \left\Vert f_{1}\circ
 	A^{-1}\right\Vert _{p(\cdot )}+\left\Vert f_{2}\circ A^{-1}\right\Vert
 	_{p(\cdot )}.
 	\end{equation*}
 	We define $E=\left\{ x:p(x)\geq p_{\infty }\right\} $ and $F=\left\{
 	x:p(x)<p_{\infty }\right\} $.
 	We estimate $\left\Vert f_{2}\circ A^{-1}\right\Vert _{p(\cdot )}$,
 	\begin{equation*}
 	\left\Vert f_{2}\circ A^{-1}\right\Vert _{L^{p(\cdot )}}\leq \left\Vert
 	f_{2}\circ A^{-1}\right\Vert _{L^{p(\cdot )}(E)}+\left\Vert f_{2}\circ
 	A^{-1}\right\Vert _{L^{p(\cdot )}(F)}.
 	\end{equation*}
 	Since $f_{2}$ is bounded and with compact support, $f_{2}\in L^{p_{\infty }}(%
 	\mathbb{R}^{n})$ and so
 	
 	\begin{equation*}
 	f_{2}\circ A^{-1}\in L^{p_{\infty }}(\mathbb{R}%
 	^{n}) 
 	\end{equation*}
 	Lemma 3.28 in \cite{C-F}, with $g=f_{2}\circ A^{-1}$, $t(\cdot )=p_{\infty }$, $u(\cdot)=p(\cdot )$ implies that, if $\left\Vert f_{2}\circ A^{-1}\right\Vert _{L^{p_{\infty }}(E)}<1$ then
 	\begin{equation*}
 	\left\Vert f_{2}\circ A^{-1}\right\Vert _{L^{p(\cdot )}(E)}\leq \left\Vert
 	f_{2}\circ A^{-1}\right\Vert _{L^{p_{\infty }}(E)}+1<2.
 	\end{equation*}
 	and if $\left\Vert f_{2}\circ A^{-1}\right\Vert _{L^{p_{\infty }}(E)}\geq 1$
 	then
 	\begin{equation*}
 	\left\Vert f_{2}\circ A^{-1}\right\Vert _{L^{p(\cdot )}(E)}\leq 2\left\Vert
 	f_{2}\circ A^{-1}\right\Vert _{L^{p_{\infty }}(E)}\leq 2\det (A)\left\Vert
 	f_{2}\right\Vert _{L^{p_{\infty }}(\mathbb{R}^{n})}
 	\end{equation*}
 	\begin{equation*}
 	\leq 2\det (A)\left[
 	\left\Vert f_{2}\right\Vert _{L^{p_{\infty }}(E)}+\left\Vert
 	f_{2}\right\Vert _{L^{p_{\infty }}(F)}\right]. 
 	\end{equation*}
 	To estimate $\left\Vert f_{2}\right\Vert _{L^{p_{\infty }}(E)}$,
 	by the definition of $E$, we define the defect exponent $r(\cdot )\in
 	\mathcal{P}(E)$ by
 	\begin{equation*}
 	\frac{1}{p_{\infty }}=\frac{1}{p(x)}+\frac{1}{r(x)}. 
 	\end{equation*}
 	By the generalizated H\"{o}lder's inequality, Corollary 2.28 in \cite{C-F}, 
 	\begin{equation*}
 	\left\Vert f_{2}\right\Vert _{L^{p_{\infty }}(E)}\leq K\left\Vert
 	1\right\Vert _{L^{r(\cdot )}(E)}\left\Vert f_{2}\right\Vert _{L^{p(\cdot
 			)}(E)}\leq K\left\Vert 1\right\Vert _{L^{r(\cdot )}(E)}<\infty.
 	\end{equation*}
 	The last inequality follows since $r(\cdot)\in N_{\infty }$, $r_{\infty}=\infty$ and so Lemma \ref{desigualdad-Ninfinito} implies $1\in L^{r(\cdot )}(E)$.\newline\newline
 	To estimate $\left\Vert f_{2}\right\Vert _{L^{p_{\infty }}(F)}$,
 	we apply Lemma 3.28 in \cite{C-F}, with $g=f_{2}\in L^{p(\cdot )}(F)$, $t(\cdot
 	)=p(\cdot )$, $u(\cdot )=p_{\infty }$. Since $\left\Vert f_{2}\right\Vert _{L^{p(\cdot )}(F)}\leq 1$,
 	\begin{equation*}
 	\left\Vert f_{2}\right\Vert _{L^{p_{\infty }}(F)}\leq \left\Vert
 	f_{2}\right\Vert _{L^{p(\cdot )}(\mathbb{R}^{n})}+1\leq 2. 
 	\end{equation*}
 	Combining the above estimates we get
 	\begin{equation*}
 	\left\Vert f_{2}\circ A^{-1}\right\Vert _{L^{p(\cdot )}(E)}\leq
 	C(K\left\Vert 1\right\Vert _{L^{r(\cdot )}(E)}+2)<\infty.
 	\end{equation*}
 	Now, in a similar way, we estimate $\left\Vert f_{2}\circ A^{-1}\right\Vert _{L^{p(\cdot)}(F)}$. We define the defect exponent $s(\cdot )\in \mathcal{P}(F)$ by
 	\begin{equation*}
 	\frac{1}{p(x)}=\frac{1}{p_{\infty }}+\frac{1}{s(x)}.
 	\end{equation*}
 	By the generalized H\"{o}lder's inequality, Corollary 2.28 in \cite{C-F},
 	\begin{equation*}
 	\lVert f_{2}\circ A\rVert_{L^{p(\cdot )}(F)}^{-1}\leq K\left\Vert 1\right\Vert
 	_{L^{s(\cdot )}(F)}\left\Vert f_{2}\circ A^{-1}\right\Vert _{L^{p_{\infty
 		}}(F)}.
 	\end{equation*}
 	Since $s(\cdot )\in N_{\infty }$ and $s_{\infty}=\infty$, by Lemma \ref{desigualdad-Ninfinito} we have that $1\in L^{s(\cdot )}(F)$. Further, we can now argue as we did above to get
 	\begin{equation*}
 	\left\Vert f_{2}\circ A^{-1}\right\Vert _{L^{p_{\infty }}(F)}\leq \left\Vert
 	f_{2}\circ A^{-1}\right\Vert _{L^{p_{\infty }}(\mathbb{R}^{n})}\leq \det
 	(A)\left\Vert f_{2}\right\Vert _{L^{p_{\infty }}(\mathbb{R}^{n})}
 	\end{equation*}
 	\begin{equation*}
 	\leq \det
 	(A)\left[ \left\Vert f_{2}\right\Vert _{L^{p_{\infty }}(E)}+\left\Vert
 	f_{2}\right\Vert _{L^{p_{\infty }}(F)}\right] <\infty. 
 	\end{equation*}
 	We now estimate $\left\Vert f_{1}\circ A^{-1}\right\Vert _{p(\cdot )}$.
 	Since $p_{+}<\infty $ it's enough to prove that there exits $c>0$ such that
 	$\rho _{p(\cdot )}(f_{1}\circ A^{-1})\leq c$.
 	Since $p(Ax)\leq p(x)$ $a.e.x\in \mathbb{R}^n$ and again from Corollary 2.22 in \cite{C-F},
 	\begin{equation*}
 	\int f_{1}(A^{-1}x)^{p(x)}dx=\det (A)\int f_{1}(x)^{p(Ax)}dx\leq \det
 	(A)\rho _{p(\cdot )}(f_{1})\leq c\left\Vert f_{1}\right\Vert _{p(\cdot )}<c.
 	\end{equation*}
 	For general $f\in L^{p(.)}\left( \mathbb{R}^{n}\right) ,$ we apply Theorem
 	2.59 in \cite{C-F}. For $k\in \mathbb{N},$ we define $f^{k}(x)=\left\vert
 	f\right\vert \chi _{\left\{ x:\left\vert x\right\vert \leq k,\left\vert
 		f(x)\right\vert \leq k\right\} },$ $f^{k}(x)$ increases to $\lvert f(x) \rvert$ pointwise
 	almost everyhere and so $\left\Vert f^{k}\right\Vert _{p(.)}\rightarrow
 	\left\Vert f\right\Vert _{p(.)}$ and also $\left\Vert f^{k}\circ
 	A^{-1}\right\Vert _{p(.)}\rightarrow \left\Vert f\circ A^{-1}\right\Vert
 	_{p(.)}.$ Since each $f^{k}$ is a bounded function with compact support and $%
 	\left\Vert \frac{f_{k}}{\left\Vert f_{k}\right\Vert _{p(.)}}\right\Vert \leq
 	1$, we have proved that there exists a constant $c>0$ such that $\left\Vert 
 	\frac{f_{k}\circ A^{-1}}{\left\Vert f_{k}\right\Vert _{p(.)}}\right\Vert
 	_{p(.)}\leq c$, so 
 	\begin{equation*}
 	\left\Vert \frac{f\circ A^{-1}}{\left\Vert f\right\Vert _{p(.)}}\right\Vert
 	_{p(.)}=\lim\limits_{k\rightarrow \infty }\left\Vert \frac{f_{k}\circ A^{-1}%
 	}{\left\Vert f\right\Vert _{p(.)}}\right\Vert _{p(.)}\leq
 	\lim\limits_{k\rightarrow \infty }\left\Vert \frac{f_{k}\circ A^{-1}}{%
 		\left\Vert f_{k}\right\Vert _{p(.)}}\right\Vert _{p(.)}\leq c,
 	\end{equation*}
 	and then 
 	\begin{equation*}
 	\left\Vert f\circ A^{-1}\right\Vert _{p(.)}\leq c\left\Vert f\right\Vert
 	_{p(.)}.
 	\end{equation*}
	$b)$ Let $f\in L^{p(\cdot)}(\mathbb{R}^n)$. We have that%
\begin{equation*}
\left\Vert f\circ A\right\Vert _{p(\cdot )}=\inf \left\{ \lambda >0:\int_{%
	\mathbb{R}^{n}}\left( \frac{f(Ax)}{\lambda }\right) ^{p(x)}dx\leq 1\right\}. 
\end{equation*}
By a change of variable and using the hypothesis on the exponent, 
\begin{equation}
\label{Det}
\int_{\mathbb{R}^{n}}\left( \frac{f(Ax)}{\lambda }\right)
^{p(x)}dx=\left\vert \det(A^{-1})\right\vert \int_{\mathbb{R}^{n}}\left( \frac{f(y)}{\lambda }\right)^{p(y)}dy,
\end{equation}
Let $D=\left\vert \det (A^{-1})\right\vert $, then we have two cases:
\bigskip If $D\leq 1,$%
\begin{equation*}
\left\Vert f\circ A\right\Vert _{p(\cdot )}\leq \left\Vert f\right\Vert
_{p(\cdot )}.
\end{equation*}
If $D>1$, then from (\ref{Det}) it is follows that%
\begin{equation*}
D\int_{\mathbb{R}^{n}}\left( \frac{f(y)}{\lambda }\right) ^{p(y)}dy=\int_{%
	\mathbb{R}^{n}}\left( \frac{f(y)}{\lambda C^{\frac{1}{p(y)}}}\right)
^{p(y)}dy,
\end{equation*}
where $C=\frac{1}{D}$. So,
\begin{equation*}
\leq \int_{\mathbb{R}^{n}}\left( \frac{f(y)}{\lambda C^{\frac{1}{p_{-}}}}%
\right) ^{p(y)}dy.
\end{equation*}
That is,
\begin{equation*}
\int_{\mathbb{R}^{n}}\left( \frac{f(Ax)}{\lambda }\right) ^{p(x)}dx\leq
\int_{\mathbb{R}^{n}}\left( \frac{f(x)}{\lambda C^{\frac{1}{p_{-}}}}\right)
^{p(x)}dx.
\end{equation*}
From this last inequality it follows that%
\begin{equation*}
\left\Vert f\circ A\right\Vert _{p(\cdot )}\leq D^{\frac{1}{p_{-}}%
}\left\Vert f\right\Vert _{p(\cdot )}.
\end{equation*}
\end{proof} 
\begin{theo}
	Let $0\leq \alpha <n$ and let $T_{\alpha }$ be the integral operator given
	by (\ref T). Let $m\in \mathbb{N}$ (or $m\in \mathbb{N\setminus }\left\{
	1\right\} $ for $\alpha =0$), let $A_{1},...,A_{m}$ be invertible matrices
	such that $A_{i}-A_{j}$ is invertible for $i\neq j$, $1\leq i,j\leq m$ and
	the functions $\Omega _{i}$ satisfy the hypothesis (H1) and (H2). Let $s\geq 1
	$ be defined by $\frac{1}{p_{1}}+...+\frac{1}{p_{m}}+\frac{1}{s}=1$, let $
	p(\cdot )\in \mathcal{P(\mathbb{R}}^{n}\mathcal{)}$ be such that $1\leq
	s\leq p_{-}\leq p_{+}<\frac{n}{\alpha }$ and such that $p(A_{i}x)\leq p(x)$
	a.e.$x\in \mathbb{R}^{n}$ and let $q(\cdot )\in \mathcal{P(\mathbb{R}}^{n}%
	\mathcal{)}$ be defined by $\frac{1}{p(\cdot )}-\frac{1}{q(\cdot )}=\frac{%
		\alpha }{n}$. If $\frac{q(\cdot )}{s}\in N_{\infty }(\mathbb{R}^{n})\cap
	K_{0}(\mathbb{R}^{n})$ then,
	
	$a)$ there exist $C>0$ such that
	
	\begin{equation*}
	\left\Vert \lambda \chi _{\left\{ x:T_{\alpha }f(x)>\lambda \right\}
	}\right\Vert _{q(\cdot )}\leq C\left\Vert f\right\Vert _{p(\cdot )}
	\end{equation*}
	
	for all $\lambda>0$, $f\in L_{c}^{\infty }(\mathbb{R}^{n})$.
	
	$b)$ If $p_{-}>s$ then $T_{\alpha }$ extends to a bounded operator from $%
	L^{p(\cdot )}(\mathbb{R}^{n})$ into $L^{q(\cdot )}(\mathbb{R}^{n})$.
\end{theo}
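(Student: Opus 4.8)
The plan is to reduce the boundedness of $T_{\alpha}$ to the fractional maximal estimate of Lemma \ref{M-F} by means of the sharp maximal function, exactly as announced in the Introduction. The engine of the argument is a pointwise estimate controlling the oscillation of $T_{\alpha}f$ by the operator $M_{\alpha,s}$ of (\ref{Malfas}), with $s$ the exponent fixed by $\frac{1}{p_{1}}+\cdots+\frac{1}{p_{m}}+\frac{1}{s}=1$. Concretely, I would first prove that for $f\in L_{c}^{\infty}(\mathbb{R}^{n})$ there is a constant $C$, independent of $f$, with
\begin{equation*}
M^{\#}\!\left(|T_{\alpha}f|^{s}\right)(x)^{1/s}\leq C\sum_{i=1}^{m}(M_{\alpha,s}f)\!\left(A_{i}^{-1}x\right),\qquad x\in\mathbb{R}^{n}.
\end{equation*}
To establish this I would fix a ball $B=B(x_{0},r)$ and split $f=f_{1}+f_{2}$, where $f_{1}$ is the restriction of $f$ to a fixed dilate of $\bigcup_{i}A_{i}^{-1}(B)$; the local piece $T_{\alpha}f_{1}$ is handled through the $L^{s}$-mapping properties of the model operator coming from hypothesis $(H_{1})$ and Hölder's inequality across the $m$ factors $k_{i}$, which is exactly where $s=(\sum 1/p_{i})'$ and the averages defining $M_{\alpha,s}f(A_{i}^{-1}x_{0})$ appear. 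The global piece $T_{\alpha}f_{2}-(T_{\alpha}f_{2})_{B}$ is estimated by summing the kernel differences $|K(x,y)-K(x_{0},y)|$ over dyadic annuli, using the homogeneity of the $k_{i}$ together with the Dini condition $(H_{2})$; here the invertibility of each difference $A_{i}-A_{j}$ is essential, since it guarantees that on each region at most one factor $k_{j}(x-A_{j}y)$ is singular and hence that the product remains integrable.

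Granting the pointwise estimate, I would transfer it to the variable setting through the sharp maximal inequality of \cite{C-F} recalled in the Introduction, applied at the exponent $q(\cdot)/s$. Writing $\|T_{\alpha}f\|_{q(\cdot)}=\||T_{\alpha}f|^{s}\|_{q(\cdot)/s}^{1/s}$, that inequality is available provided the maximal operator is bounded on $L^{(q(\cdot)/s)'}(\mathbb{R}^{n})$; since by hypothesis $q(\cdot)/s\in K_{0}(\mathbb{R}^{n})\cap N_{\infty}(\mathbb{R}^{n})$ and, when $p_{-}>s$, one has $1<(q/s)_{-}\leq (q/s)_{+}<\infty$, the Cruz Uribe--Fiorenza--Neugebauer theorem gives boundedness on $L^{q(\cdot)/s}$, and the duality of the maximal operator in variable Lebesgue spaces then yields it on the conjugate space. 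Thus $\|T_{\alpha}f\|_{q(\cdot)}\leq C\sum_{i}\|(M_{\alpha,s}f)\circ A_{i}^{-1}\|_{q(\cdot)}$. Because $\frac{1}{q(\cdot)}=\frac{1}{p(\cdot)}-\frac{\alpha}{n}$ is an increasing function of $p(\cdot)$, the hypothesis $p(A_{i}x)\leq p(x)$ forces $q(A_{i}x)\leq q(x)$ a.e., so Proposition \ref{prop-Ninfty} $a)$ removes the matrices at the cost of a constant, giving $\|(M_{\alpha,s}f)\circ A_{i}^{-1}\|_{q(\cdot)}\leq c\,\|M_{\alpha,s}f\|_{q(\cdot)}$.

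It then remains to bound $\|M_{\alpha,s}f\|_{q(\cdot)}$. Since $M_{\alpha,s}f=(M_{\alpha s}|f|^{s})^{1/s}$, I would apply Lemma \ref{M-F} to $|f|^{s}$ with fractional order $\alpha s$ and exponents $p(\cdot)/s\to q(\cdot)/s$: the compatibility $\frac{1}{p(\cdot)/s}-\frac{1}{q(\cdot)/s}=\frac{\alpha s}{n}$ holds, the constraint $1<p_{-}/s$ is precisely $p_{-}>s$, and $p_{+}/s<n/(\alpha s)$ follows from $p_{+}<n/\alpha$, so Lemma \ref{M-F} yields $\|M_{\alpha,s}f\|_{q(\cdot)}\leq C\|f\|_{p(\cdot)}$ and $b)$ follows for $f\in L_{c}^{\infty}$; the extension to all of $L^{p(\cdot)}(\mathbb{R}^{n})$ is obtained by the density and monotone approximation argument already used in the proof of Proposition \ref{prop-Ninfty} (Theorem 2.59 in \cite{C-F}), using $p_{+}<\infty$. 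For part $a)$, where only $s\leq p_{-}$ is assumed, I would run the same chain with the weak-type sharp maximal inequality of \cite{C-F} and the weak-type conclusion of Lemma \ref{M-F}, which corresponds exactly to the endpoint $(p/s)_{-}=1$, that is $p_{-}=s$; the power-$s$ device converts the weak-$(q/s)$ control of $|T_{\alpha}f|^{s}$ into the desired weak-$q$ bound for $T_{\alpha}f$.

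The \emph{main obstacle} is the pointwise estimate of the first step, and within it the global part: one must extract enough decay from the Dini condition $(H_{2})$ while keeping the $m$ singular factors simultaneously under control, and it is here that the invertibility of each $A_{i}-A_{j}$ does the decisive work of separating the singularities. The passage to variable exponents, by contrast, is essentially bookkeeping once the correct spaces are identified: the role of $q(\cdot)/s\in K_{0}\cap N_{\infty}$ is precisely to make the maximal operator bounded on the space $L^{(q(\cdot)/s)'}$ demanded by the sharp maximal inequality, while the role of $p(A_{i}x)\leq p(x)$ is to let Proposition \ref{prop-Ninfty} absorb the matrices $A_{i}^{-1}$.
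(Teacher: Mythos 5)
Your proposal is correct and its skeleton coincides with the paper's: pointwise control of the sharp maximal function of $T_{\alpha}f$ by $\sum_{i}M_{\alpha,s}f(A_{i}^{-1}\cdot)$, transfer to variable exponents via the sharp maximal inequality of \cite{C-F}, removal of the matrices by Proposition \ref{prop-Ninfty}, and conclusion by Lemma \ref{M-F} applied to $|f|^{s}$ with exponents $p(\cdot)/s\to q(\cdot)/s$ and order $\alpha s$. The one substantive difference is that what you single out as the main obstacle --- the local/global decomposition proving the pointwise estimate --- is not proved in the paper at all: inequality (\ref{MSharp}), $M^{\#}(T_{\alpha}f)(x)\leq c\sum_{i=1}^{m}M_{\alpha,s}f(A_{i}^{-1}x)$, is quoted directly from \cite{Ri-U}, so the heavy kernel analysis (Hölder across the $m$ factors, Dini condition, invertibility of $A_{i}-A_{j}$) is outsourced; your sketch of it is the right strategy but, as written, it is a sketch of that other paper's theorem rather than a complete argument, and you should simply cite it. The remaining differences are organizational and both versions work. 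You run the sharp maximal inequality at exponent $q(\cdot)/s$ on $|T_{\alpha}f|^{s}$, needing $M$ bounded on $L^{(q(\cdot)/s)'}$, obtained from $q(\cdot)/s\in K_{0}\cap N_{\infty}$ plus duality (note this is Theorem 4.52 in the book \cite{C-F}, not the Cruz Uribe--Fiorenza--Neugebauer log-Hölder theorem); the paper instead works at exponent $q(\cdot)$ on $T_{\alpha}f$ itself and gets boundedness of $M$ on $L^{q'(\cdot)}$ through the chain $q(\cdot)\in N_{\infty}$, Remark 4.6, Theorems 4.52 and 4.37, Corollaries 4.64 and 4.50 of \cite{C-F}. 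In part $a)$ your route is in fact cleaner than the paper's: since $\chi_{\{x:M_{\alpha,s}f(A_{i}^{-1}x)>\lambda\}}=\chi_{\{M_{\alpha,s}f>\lambda\}}\circ A_{i}^{-1}$, Proposition \ref{prop-Ninfty}$\,a)$ applies directly to these characteristic functions at exponent $q(\cdot)$ (using $q(A_{i}x)\leq q(x)$, which, as you observe, follows from $p(A_{i}x)\leq p(x)$, and $q(\cdot)\in N_{\infty}$, which follows by scaling from $q(\cdot)/s\in N_{\infty}$), whereas the paper dualizes the $L^{q(\cdot)/s}$ norm, changes variables, and applies Hölder's inequality together with Proposition \ref{prop-Ninfty} to $h\circ A_{i}$ at the exponent $(q(\cdot)/s)'$; your shortcut is exactly the mechanism the paper itself uses in its proof of part $b)$, so it is legitimate, and both chains terminate in the weak-type case of Lemma \ref{M-F} precisely as you describe.
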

\begin{proof}
	$a)$ In \cite{Ri-U} the authors prove that, for $f\in L_{c}^{\infty}(\mathbb{R}^n)$,
	\begin{equation}
	\label{MSharp}
	M^{\#}(T_{\alpha }f)(x)\leq c\sum_{i=1}^{m}M_{\alpha ,s}f(A_{i}^{-1}x),
	\end{equation}
	$a.e.x\in \mathbb{R}^n$, where $M_{\alpha,s}$ is defined in \ref{Malfas}. Since $\frac{q(\cdot)}{s} \in  N_{\infty}(\mathbb{R}^n)\cap K_0 (\mathbb{R}^n)$ then $q'(\cdot) \in N_{\infty}(\mathbb{R}^n)\cap K_0 (\mathbb{R}^n)$. Indeed is easy to check that if $p(\cdot) \in N_{\infty}(\mathbb{R}^n)$ then $\alpha p(\cdot) \in N_{\infty} (\mathbb{R}^n)$ for all $\alpha\geq 1$. So $s\frac{q(\cdot )}{s}=q(\cdot )\in
	N_{\infty }(\mathbb{R}^n).$ Remark 4.6 in \cite{C-F} implies $\ q^{\prime
	}(\cdot )\in N_{\infty }(\mathbb{R}^n)$. Since $\frac{q(\cdot )}{s}\in N_{\infty }(\mathbb{R}^n)\cap K_{0}(\mathbb{R}^n)$ then by Theorem 4.52 in \cite{C-F} the maximal operator it is bounded on $L^{\frac{q(\cdot )}{s}}(\mathbb{R}^n)$. So, by Theorem 4.37 in \cite{C-F}, it is bounded on  $L^{q(\cdot)}(\mathbb{R}^n)$. Also by Corollary 4.64 in \cite{C-F} it is bounded on $L^{q^{\prime }(\cdot )}(\mathbb{R}^n)$. By Corollary 4.50 in \cite{C-F}, $q^{\prime }(\cdot )\in K_{0}(\mathbb{R}^n)$. And so $q^{\prime }(\cdot )\in N_{\infty }(\mathbb{R}^n )\cap K_{0}(\mathbb{R}^n)$.\newline\newline
	Let $\lambda >0$ and $f \in L^{\infty}_c(\mathbb{R}^n)$. Since $q'(\cdot)\in N_{\infty}(\mathbb{R}^n)\cap K_0 (\mathbb{R}^n)$, again Theorem 5.52 in \cite{C-F} implies that the maximal operator is bounded on $L^{q'(\cdot)}(\mathbb{R}^n)$, so from Theorem 5.54 in \cite{C-F} and (\ref{MSharp}), 
	\begin{equation*}
	\lVert \lambda \chi _{\{x:T_{\alpha }f(x)>\lambda \}} \rVert
	_{q(\cdot )} \leq C\sup_{\lambda >0}\left\Vert \lambda \chi
	_{\{x:M^{\#}(T_{\alpha }f)(x)>\lambda \}}\right\Vert _{q(\cdot )}
	\end{equation*}
	\begin{equation*}
	\leq C\sup_{\lambda >0}\left\Vert \lambda \chi
	_{\{x:\sum_{i=1}^{m}M_{\alpha ,s}f(A_{i}^{-1}x)>\frac{\lambda }{c}
		\}}\right\Vert _{q(\cdot )} \leq C\sup_{\lambda >0}\left\Vert \lambda \sum_{i=1}^{m}\chi
	_{\{x:M_{\alpha ,s}f(A_{i}^{-1}x)>\frac{\lambda }{cm}\}}\right\Vert
_{q(\cdot )} 
	\end{equation*}

	\begin{equation*}
	\leq C\sup_{\lambda >0}\sum_{i=1}^{m}\left\Vert \lambda \chi
	_{\{x:M_{\alpha ,s}f(A_{i}^{-1}x)>\frac{\lambda }{cm}\}}\right\Vert
	_{q(\cdot )} 	\leq C\sup_{\lambda >0}\sum_{i=1}^{m}\left\Vert \lambda \chi
	_{\{x:M_{\alpha .s}\left\vert f\right\vert ^{s}(A_{i}^{-1}x)>(\frac{\lambda}{cm})^{s}\}}\right\Vert _{q(\cdot )} 
	\end{equation*}
	So by Proposition 2.18 in \cite{C-F} and by Theorem 2.34 in \cite{C-F}
	\begin{equation*}
	=C\sup_{\lambda >0}\sum_{i=1}^{m}\left\Vert \lambda ^{s}\chi
	_{\{x:M_{\alpha .s}\left\vert f\right\vert ^{s}(A_{i}^{-1}x)>(\frac{\lambda 
		}{cm})^{s}\}}\right\Vert _{\frac{q(\cdot )}{s}}^{\frac{1}{s}} 
	\end{equation*}
	\begin{equation*}
	\leq C\sup_{\lambda >0}\sum_{i=1}^{m}\left\Vert (\frac{\lambda }{cm}%
	)^{s}\chi _{\{x:M_{\alpha .s}\left\vert f\right\vert ^{s}(A_{i}^{-1}x)>(%
\frac{\lambda }{cm})^{s}\}}\right\Vert _{\frac{q(\cdot )}{s}}^{\frac{1}{s}} 
	\end{equation*}
	\begin{equation*}
	\leq C\sup_{\lambda >0}\sum_{i=1}^{m}\left[ \sup_{\left\Vert h\right\Vert
		_{(\frac{q(\cdot )}{s})^{\prime }}=1}\int_{\{x:M_{\alpha .s}\left\vert
		f\right\vert ^{s}(A_{i}^{-1}x)>(\frac{\lambda }{cm})^{s}\}}(\frac{\lambda }{
		cm})^{s}h(x)dx\right] ^{\frac{1}{s}}
	\end{equation*}
	\begin{equation*}
	=C\sup_{\lambda >0}\sum_{i=1}^{m}\left[
	\sup_{\left\Vert h\right\Vert _{(\frac{q(\cdot )}{s})^{\prime
		}}=1} \int_{A_{i}^{-1}\{x:M_{\alpha
			.s}\left\vert f\right\vert ^{s}(A_{i}^{-1}x)>(\frac{\lambda }{cm})^{s}\}}(
	\frac{\lambda }{cm})^{s}h(A_{i}y)dy\right] ^{\frac{1}{s}}
	\end{equation*}
	\begin{equation*}
	\leq C\sup_{\lambda >0}\sum_{i=1}^{m}\left[ \sup_{\left\Vert h\right\Vert
		_{(\frac{q(\cdot )}{s})^{\prime }}=1} \int_{\{y:M_{\alpha .s}\left\vert f\right\vert ^{s}(y)>(\frac{\lambda }{cm}
		)^{s}\}}(\frac{\lambda }{cm})^{s}h(A_{i}y)dy\right] ^{\frac{1}{s}}
	\end{equation*}\newline
	We apply the Hölder's inequality, Theorem 2.26 in \cite{C-F}, 
	\begin{equation*}
	\leq C\sup_{\lambda >0}\sum_{i=1}^{m}\left[
	\sup_{\left\Vert h\right\Vert _{(\frac{q(\cdot )}{s})^{\prime
		}}=1} \left\Vert (\frac{\lambda }{cm}%
	)^{s}\chi _{\{y:M_{\alpha .s}\left\vert f\right\vert ^{s}(y)>(\frac{\lambda 
		}{cm})^{s}\}}\right\Vert _{\frac{q(\cdot )}{s}}\left\Vert h\circ
	A_{i}\right\Vert _{(\frac{q(\cdot )}{s})^{\prime }}\right] ^{\frac{1}{s}}
	\end{equation*}\newline
	Since $(\frac{q(\cdot)}{s})' \in N_{\infty}(\mathbb{R}^n)$, by Proposition \ref{prop-Ninfty}, Lemma \ref{M-F} and Proposition 2.18 in \cite{C-F}
	\begin{equation*}
	\leq C\sup_{\lambda >0}\sum_{i=1}^{m}\left[
	\sup_{\left\Vert h\right\Vert _{(\frac{q(\cdot )}{s})^{\prime
		}}=1} \left\Vert (\frac{\lambda }{cm}
	)^{s}\chi _{\{y:M_{\alpha .s}\left\vert f\right\vert ^{s}(y)>(\frac{\lambda 
		}{cm})^{s}\}}\right\Vert _{\frac{q(\cdot )}{s}}\left\Vert h\right\Vert _{(
		\frac{q(\cdot )}{s})^{\prime }}\right] ^{\frac{1}{s}}
	\end{equation*}
	\begin{equation*}
	\leq C\sup_{\lambda >0}\left\Vert (\frac{\lambda }{cm})^{s}\chi
	_{\{y:M_{\alpha .s}\left\vert f\right\vert ^{s}(y)>(\frac{\lambda }{cm}
		)^{s}\}}\right\Vert _{\frac{q(\cdot )}{s}}^{\frac{1}{s}} 	\leq C\left\Vert \left\vert f\right\vert ^{s}\right\Vert _{\frac{p(\cdot )}{s}}^{\frac{1}{s}} 
	=C\left\Vert f\right\Vert _{p(\cdot )}.
	\end{equation*}\newline\newline
$b)$ Let $f\in L_{c}^{\infty}(\mathbb{R}^n)$. By Theorem 5.54 in \cite{C-F}, since $q'(\cdot)\in N_{\infty}(\mathbb{R}^n)\cap K_{0}(\mathbb{R}^n)$, 
	
	\begin{equation*}
	\left\Vert T_{\alpha }f\right\Vert _{q(\cdot )} \leq C\left\Vert
	M^{\#}\left\vert T_{\alpha }f\right\vert \right\Vert _{q(\cdot )} 
	\end{equation*}
	Now we use (\ref{MSharp}) and since $q(\cdot)\in N_{\infty}(\mathbb{R}^n)$, by the Proposition \ref{prop-Ninfty}
	\begin{equation*}
	\leq \sum_{i=1}^{m}\left\Vert M_{\alpha ,s}f(A_{i}^{-1}\cdot )\right\Vert
	_{q(\cdot )} \leq C\sum_{i=1}^{m}\left\Vert M_{\alpha ,s}f\right\Vert _{q(\cdot )} 	=Cm\left\Vert (M_{\alpha .s}\left\vert f\right\vert ^{s})^{1/s}\right\Vert
	_{q(\cdot )} 
	\end{equation*}
	By the Proposition 2.18 in \cite{C-F}, Lemma \ref{M-F} and Proposition 2.18 in \cite{C-F}
	\begin{equation*}
	=Cm\left\Vert (M_{\alpha .s}\left\vert f\right\vert ^{s})\right\Vert _{\frac{%
			q(\cdot )}{s}}^{\frac{1}{s}} 	\leq C\left\Vert \left\vert f\right\vert ^{s}\right\Vert _{\frac{p(\cdot )}{s%
		}}^{\frac{1}{s}} =C\left\Vert f\right\Vert _{p(\cdot )}.
	\end{equation*}
	Now $b)$ follows since $L_{c}^{\infty}(\mathbb{R}^n)$ is dense in $L^{p(\cdot)}(\mathbb{R}^n)$.
\end{proof}
\begin{theo}
	Let $0\leq \alpha <n$ and let $T_{\alpha }$ be the integral operator given
	by (4). Let $m\in \mathbb{N}$ (or $m\in \mathbb{N\setminus \{}1\mathbb{\}}$
	for $\alpha =0$). Let $A_{1},...,A_{m}~$be invertible matrices such
	that $A_{i}-A_{j}$ is invertible for $i\neq j,~1\leq i,j\leq m$ and the
	functions $\Omega _{i}$ satisfy the hypothesis $(H_{1})$ and $(H_{2})$. Let $
	s\geq 1$ be defined by $\frac{1}{p_{1}}+...+\frac{1}{p_{m}}+\frac{1}{s}=1$,
	let $p(\cdot )\in \mathcal{P(\mathbb{R}}^{n}\mathcal{)}$ be such that $
	1\leq s\leq p_{-}\leq p_{+}<\frac{n}{\alpha }$ and such that $
	p(A_{i}x)=p(x)~a.e.x\in \mathbb{R}^{n}$ and let $q(\cdot )\in \mathcal{P(%
		\mathbb{R}}^{n}\mathcal{)}$ be defined by $\frac{1}{p(\cdot )}-\frac{1}{%
		q(\cdot )}=\frac{\alpha }{n}$. If the maximal operator is bounded on $%
	L^{q^{\prime }(\cdot )}(\mathbb{R}^{n})$ then,
	
	$a)$ there exist $c>0$ such that
	\begin{equation*}
	\left\Vert \lambda \chi _{\{x:T_{\alpha }f(x)>\lambda \}}\right\Vert
	_{q(\cdot )}\leq c\left\Vert f\right\Vert _{p(\cdot )} 
	\end{equation*}
	
	for all $\lambda >0$, $f\in L_{c}^{\infty }(\mathbb{R}^{n})$.
	
	$b)$ If $p_{-}>s$ then $T_{\alpha }$ extendes to a bounded operator from $
	L^{p(\cdot )}(\mathbb{R}^{n})$ into $L^{q(\cdot )}(\mathbb{R}^{n})$.
\end{theo}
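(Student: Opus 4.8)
The plan is to run the same argument as in the previous theorem, starting from the pointwise bound (\ref{MSharp}), $M^{\#}(T_{\alpha}f)(x)\leq c\sum_{i=1}^{m}M_{\alpha,s}f(A_{i}^{-1}x)$, and reducing both statements to Lemma \ref{M-F}. The two places where the hypotheses enter are the sharp maximal inequality and the change of variables $x\mapsto A_i^{-1}x$. For the first, the standing assumption that the maximal operator is bounded on $L^{q'(\cdot)}(\mathbb{R}^n)$ is exactly what Theorem 5.54 in \cite{C-F} requires, so for $f\in L_c^{\infty}(\mathbb{R}^n)$ one obtains in part $a)$ the weak bound $\|\lambda\chi_{\{T_\alpha f>\lambda\}}\|_{q(\cdot)}\leq C\sup_{\lambda>0}\|\lambda\chi_{\{M^{\#}(T_\alpha f)>\lambda\}}\|_{q(\cdot)}$ and in part $b)$ the strong bound $\|T_\alpha f\|_{q(\cdot)}\leq C\|M^{\#}(T_\alpha f)\|_{q(\cdot)}$.

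The role of the new matrix condition is to supply every change of variables without any $N_\infty$ assumption. Since $p(A_ix)=p(x)$ a.e. and $\frac{1}{q(x)}=\frac{1}{p(x)}-\frac{\alpha}{n}$, we get $q(A_ix)=q(x)$ a.e., and the same invariance then propagates to $\frac{q(\cdot)}{s}$ and to $(\frac{q(\cdot)}{s})'$. Hence Proposition \ref{prop-Ninfty} $b)$ applies directly to each $A_i$ and each $A_i^{-1}$, giving $\|g\circ A_i^{-1}\|_{q(\cdot)}\leq c\|g\|_{q(\cdot)}$ and $\|h\circ A_i\|_{(q(\cdot)/s)'}\leq c\|h\|_{(q(\cdot)/s)'}$. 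This replaces the use of part $a)$ of that proposition in the previous proof and, being a plain change of variables on the modular, it needs only the pointwise invariance of the exponent, not its membership in $N_\infty$.

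With these two ingredients the argument transfers essentially verbatim. Running the same duality chain (Proposition 2.18 and Theorem 2.34 in \cite{C-F} to pass to $\|\cdot\|_{q(\cdot)/s}$, Hölder's inequality, and Proposition \ref{prop-Ninfty} $b)$ to absorb $h\circ A_i$) reduces both parts to estimating the weak, respectively strong, $L^{q(\cdot)/s}$ norm of $M_{\alpha s}|f|^s$. Writing $M_{\alpha,s}f=(M_{\alpha s}|f|^s)^{1/s}$ as in (\ref{Malfas}) and applying Lemma \ref{M-F} with source exponent $\frac{p(\cdot)}{s}$, target exponent $\frac{q(\cdot)}{s}$ and fractional order $\alpha s$ — legitimate since $1\leq \frac{p_-}{s}\leq \frac{p_+}{s}<\frac{n}{\alpha s}$ and $\frac{s}{p(x)}-\frac{s}{q(x)}=\frac{\alpha s}{n}$ — gives $\|M_{\alpha s}|f|^s\|_{q(\cdot)/s}\leq C\||f|^s\|_{p(\cdot)/s}$, hence $\|M_{\alpha,s}f\|_{q(\cdot)}\leq C\|f\|_{p(\cdot)}$. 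In part $b)$, where $p_->s$, this is the strong-type conclusion; in part $a)$, where $p_-=s$ is allowed, the endpoint gives the weak-type conclusion, and part $b)$ then follows by density of $L_c^{\infty}(\mathbb{R}^n)$ in $L^{p(\cdot)}(\mathbb{R}^n)$. When $\alpha=0$ one argues identically with the Hardy–Littlewood maximal operator in place of $M_{\alpha s}$.

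The main obstacle is the verification of the hypothesis of Lemma \ref{M-F} in this rescaled form. Applied to the order $\alpha s$ and the exponents $\frac{p(\cdot)}{s},\frac{q(\cdot)}{s}$, the lemma requires the maximal operator to be bounded on $L^{((q(\cdot)/s)/Q_0)'}$, where $\frac{1}{p_-/s}-\frac{1}{Q_0}=\frac{\alpha s}{n}$; since $q(\cdot)$ attains its infimum where $p(\cdot)$ does, the defining relation $\frac{1}{p_-}-\frac{1}{q_-}=\frac{\alpha}{n}$ shows that $q_0=q_-$ and $Q_0=q_-/s$, so this exponent equals $(q(\cdot)/q_-)'$. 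The hypothesis only provides boundedness on $L^{q'(\cdot)}$, and since $q_-\geq 1$ one has $(q(\cdot)/q_-)'\geq q'(\cdot)$ pointwise, so the two exponents do not coincide. The delicate point is therefore to propagate the assumed boundedness to $(q(\cdot)/q_-)'$: the plan is to dualize via Corollary 4.64 in \cite{C-F} to get boundedness on $L^{q(\cdot)}$, and then to invoke the rescaling (Theorem 4.37) together with the self-improvement (left-openness) properties of the maximal operator in \cite{C-F} to reach the exponent $(q(\cdot)/q_-)'$ that Lemma \ref{M-F} demands. Once this boundedness is secured, the remaining estimates are routine repetitions of the previous proof.
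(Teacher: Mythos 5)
Your proposal follows the paper's own proof step for step: Theorem 5.54 in \cite{C-F} under the hypothesis that the maximal operator is bounded on $L^{q'(\cdot)}(\mathbb{R}^n)$, the pointwise bound (\ref{MSharp}), the duality/H\"older chain through $\Vert\cdot\Vert_{q(\cdot)/s}$, Proposition \ref{prop-Ninfty} $b)$ (using $q(A_ix)=q(x)$ and its inheritance by $(q(\cdot)/s)'$) to absorb the terms $h\circ A_i$, Lemma \ref{M-F} in the rescaled form (exponents $p(\cdot)/s$, $q(\cdot)/s$, order $\alpha s$), and density of $L^{\infty}_c(\mathbb{R}^n)$ for part $b)$. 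Up to and including that reduction, you and the paper agree entirely.

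The divergence is your final paragraph, and there the repair you sketch does not work. The point you flag is genuine: Lemma \ref{M-F}, applied with source $p(\cdot)/s$ and order $\alpha s$, requires the maximal operator to be bounded on $L^{(q(\cdot)/q_-)'}(\mathbb{R}^n)$, which is not the hypothesis $L^{q'(\cdot)}(\mathbb{R}^n)$. But none of the tools you name can bridge this. Duality (Corollary 4.64 in \cite{C-F}) would have to be applied to $q(\cdot)/q_-$, whose essential infimum is $1$; the maximal operator is never bounded on a variable space whose exponent has infimum $1$, and the duality theorem itself requires infimum strictly greater than $1$, so $(q(\cdot)/q_-)'$ cannot be reached as a dual. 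Rescaling (Theorem 4.37) only passes from an exponent $r(\cdot)$ to constant multiples $tr(\cdot)$, $t\geq 1$, and $(q(\cdot)/q_-)'$ is not a constant multiple of $q'(\cdot)$ or of $q(\cdot)$: the ratio $(q(x)/q_-)'/q'(x)=(q(x)-1)/(q(x)-q_-)$ is nonconstant and blows up where $q(x)$ approaches $q_-$. Left-openness yields boundedness on $L^{q(\cdot)/(1+\epsilon)}$ only for some unquantified small $\epsilon>0$, with no way to force $1+\epsilon\geq q_-$. So the clause ``once this boundedness is secured'' is never secured, and your proof is incomplete at exactly that step. For comparison: the paper's proof is silent on this very point --- it cites Lemma \ref{M-F} at the same juncture without verifying its hypothesis --- so the obstruction you identified is real and is shared by the published argument; your instinct to check it was right, but what you propose in its place is a plan, not a proof.
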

\begin{proof}
	$a)$ Let $\lambda >0$ and $f\in L_{c}^{\infty }(\mathbb{R}^{n})$. By Theorem 5.54
	in $\left[ 2\right] $, since the maximal operator is bounded on $%
	L^{q^{\prime }(\cdot )}(\mathbb{R}^{n})$,%
	\begin{equation*}
	\left\Vert \lambda \chi _{\{x:T_{\alpha }f(x)>\lambda \}}\right\Vert
	_{q(\cdot )}\leq C\left\Vert \lambda \chi _{\{x:M^{\#}(T_{\alpha
		}f)(x)>\lambda \}}\right\Vert _{q(\cdot )}.
	\end{equation*}
	
	Now, by (\ref{MSharp}), as in the proof of the previous Theorem, we have that%
	\begin{equation*}
	\left\Vert \lambda \chi _{\{x:T_{\alpha }f(x)>\lambda \}}\right\Vert
	_{q(\cdot )}\leq
	\end{equation*}
	\begin{equation*}
	\leq C\sup_{\lambda >0}\sum_{i=1}^{n}\left[ \sup_{\left\Vert
		h\right\Vert _{\left( \frac{q(\cdot )}{s}\right) ^{\prime }}=1}\left\vert
	\det (A_{i}^{-1})\right\vert \left\Vert \left( \frac{\lambda }{cm}\right)
	^{s}\chi _{\{y:M_{\alpha .s}\left\vert f\right\vert ^{s}(y)>\left( \frac{%
			\lambda }{cm}\right) ^{s}\}}\right\Vert _{\frac{q(\cdot )}{s}}\left\Vert
	h\circ A_{i}\right\Vert _{\left( \frac{q(\cdot )}{s}\right) ^{\prime }}%
	\right] ^{\frac{1}{s}},
	\end{equation*}
	since $\left( \frac{q(A_{i}x)}{s}\right) ^{\prime }=\left( \frac{%
	q(x)}{s}\right) ^{\prime }$, by Proposition \ref{prop-Ninfty}, Lemma \ref{M-F} and Proposition 2.18 in \cite{C-F},
	\begin{equation*}
	\leq C\sup_{\lambda >0}\sum_{i=1}^{n}\left[ \sup_{\left\Vert h\right\Vert
		_{\left( \frac{q(\cdot )}{s}\right) ^{\prime }}=1}\left\vert \det
	(A_{i}^{-1})\right\vert \left\Vert \left( \frac{\lambda }{cm}\right)
	^{s}\chi _{\{y:M_{\alpha .s}\left\vert f\right\vert ^{s}(y)>\left( \frac{%
			\lambda }{cm}\right) ^{s}\}}\right\Vert _{\frac{q(\cdot )}{s}}\left\Vert
	h\right\Vert _{\left( \frac{q(\cdot )}{s}\right) ^{\prime }}\right] ^{\frac{1
		}{s}}
	\end{equation*}
	\begin{equation*} 
	\leq C\sup_{\lambda >0}\left\Vert \left( \frac{\lambda }{cm}\right)
	^{s}\chi _{\{y:M_{\alpha .s}\left\vert f\right\vert ^{s}(y)>\left( \frac{%
			\lambda }{cm}\right) ^{s}\}}\right\Vert _{\frac{q(\cdot )}{s}}\leq
	C\left\Vert \left\vert f\right\vert ^{s}\right\Vert _{\frac{p(\cdot )}{s}}^{%
		\frac{1}{s}}=C\left\Vert f\right\Vert _{p(\cdot )}.
	\end{equation*}
$b)$ We suppose that $s<p_{-}$. Let $f\in L_{c}^{\infty }(\mathbb{R}^{n})$. By Theorem 5.54 in \cite{C-F}, since the maximal operator is
	bounded on $L^{q^{\prime }(\cdot )}(\mathbb{R}^{n})$,
	\begin{equation*}
	\left\Vert T_{\alpha }f\right\Vert _{q(\cdot )}\leq C\left\Vert
	M^{\#}(T_{\alpha }f)\right\Vert _{q(\cdot )}
	\end{equation*}
by (\ref{MSharp}) and since $q(A_{i}x)=q(x)$, by Proposition 3,
\begin{equation*}
\leq C\sum_{i=1}^{m}\left\Vert M_{\alpha ,s}f(A_{i}^{-1}\cdot )\right\Vert
_{q(\cdot )}\leq C\sum_{i=1}^{m}\left\Vert M_{\alpha ,s}f\right\Vert
_{q(\cdot )}\leq C\left\Vert f\right\Vert _{p(\cdot )}.
\end{equation*}
where the last inequality follows as in the proof of the previous theorem.
\end{proof}

\begin{itemize}
	\item {Marta Urciuolo, FAMAF, UNIVERSIDAD NACIONAL DE CORDOBA, CIEM,
		CONICET, Ciudad Universitaria, 5000 C\'{o}rdoba, Argentina. }
	
	\textcolor{azul}{E-mail adress: urciuolo@gmail.com}
	
	\item {Lucas Vallejos, FAMAF, UNIVERSIDAD NACIONAL DE CORDOBA, CIEM,
		CONICET,  Ciudad Universitaria, 5000 C\'{o}rdoba, Argentina. }
	
	\textcolor{azul}{E-mail adress: lvallejos@famaf.unc.edu.ar}
\end{itemize}


\end{document}